\newtheorem{theorem}{Theorem}[section]
\newtheorem{lemma}[theorem]{Lemma}
\newtheorem{proposition}[theorem]{Proposition}
\newtheorem{corollary}[theorem]{Corollary}
\theoremstyle{definition}
\newtheorem{definition}[theorem]{Definition}
\newtheorem{example}[theorem]{Example}
\theoremstyle{remark}
\newtheorem{remark}[theorem]{Remark}
\numberwithin{equation}{section}
\newcommand\CC{{\mathbb{C}}} 
\newcommand\LL{{\mathscr{L}}} \newcommand\MM{{\mathscr{M}}} \newcommand\OO{{\mathscr{O}}} \newcommand\PP{{\mathbb{P}}} \newcommand\QQ{{\mathbb{Q}}}  \newcommand\ZZ{{\mathbb{Z}}}
\newcommand\Aut{{\text{\rm Aut}}}
\newcommand\Bl{{\text{\rm Bl}}}
\newcommand\Gr{\text{\rm Gr}}
\newcommand\Eff{\text{\rm Eff}}
\newcommand\Mov{\text{\rm Mov}}
\newcommand\Nef{\text{\rm Nef}}
\newcommand\Pic{\text{\rm Pic}}
\newcommand\PGL{\text{\rm PGL}}
\newcommand\PO{\text{\rm PO}}
\newcommand\tih{\widetilde{H}}
\newcommand\xht{X^{[2]}} \newcommand\xst{X^{(2)}}
\newcommand\zht{Z^{[2]}} \newcommand\zst{Z^{(2)}}
\newcommand\fht{f^{[2]}}
\begin{document}

\title[Automorphisms of Hilbert squares]{On automorphisms of Hilbert squares of smooth hypersurfaces}

\author[Long\ \ Wang]{Long\ \ Wang}
\address{Graduate School of Mathematical Sciences, the University of Tokyo, 3-8-1 Komaba, Meguro-Ku, Tokyo 153-8914, Japan}
\email{wangl11@ms.u-tokyo.ac.jp}

\keywords{Hilbert scheme of points, automorphism group, hypersurface}

\subjclass[2020]{14J50, 14C05, 14J70}

\begin{abstract} Let $X$ be a smooth projective hypersurface of dimension at least three. We show that every automorphism of the Hilbert square $\xht$ of $X$ is induced by some automorphism of $X$.
\end{abstract}

\maketitle

\thispagestyle{empty}

\setcounter{section}{0}

\section{Introduction}

Hilbert schemes of points on surfaces have been studied for many years with techniques from different branches of mathematics, such as algebraic geometry, symplectic geometry  and representation theory (\cite{Na99, Qi18}). The birational geometry and automorphisms of these Hilbert schemes was explored recently (see \cite{Hu17} for a survey). One interesting question is whether each automorphism of the Hilbert scheme is natural, that is, it is induced by some automorphism of the underlying surface. This is not true in general when the underlying surface is a K3 surface (\cite{BC16, HT16, Og16, Ca19}). In \cite{BOR20}, this question was settled when the underlying surface is either weak del Pezzo or of general type (see also \cite{Ha19}). 

It is well-known that the Hilbert scheme of points on a smooth projective variety can be rather singular if the dimension of the underlying variety is greater than two. More precisely, let $X$ be a smooth projective variety of dimension $\geq 3$ and let $X^{[n]}$ be the Hilbert scheme of $n$ points on $X$, then $X^{[n]}$ is smooth if and only if $n \leq 3$ (\cite[Page 40]{Ch98}). Recently, Hilbert squares, i.e., Hilbert schemes  of two points have been studied from different aspects (\cite{GS14, Vo17, BFR19, Mb19, Sh19, BOR20, BFR20}). In this paper, we concentrate on their automorphism groups. Recall that every automorphism of $(\PP^n)^{[2]}$ is natural by \cite[Theorem 4]{BOR20}. Our aim is to generalise this result to the Hilbert square of a smooth projective hypersurface. This answers a question in \cite[Remark 21]{BFR20}.

\begin{theorem}\label{main} Let $N\geq 4$ be a positive integer. Let $X\subset \PP^N$ be a smooth projective hypersurface of degree $d \geq 2$. Then every automorphism of $\xht$ is natural.
\end{theorem}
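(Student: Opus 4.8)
The plan is to exploit the rich geometry of the Hilbert square $\xht$ — in particular its natural morphisms and distinguished divisor classes — to show that any automorphism must preserve enough structure to descend to $X$ itself. Recall that for $X \subset \PP^N$ smooth of dimension $n = N-1 \geq 3$, the Hilbert square $\xht$ carries the Hilbert–Chow morphism $\rho\colon \xht \to \xst = X^2/\mathfrak{S}_2$ contracting the exceptional divisor $E$ (the locus of non-reduced subschemes, a $\PP^{n-1}$-bundle over $X$) onto the diagonal. I would first describe $\Pic(\xht)$ explicitly: since $N \geq 4$, Lefschetz gives $\Pic(X) = \ZZ \cdot H$ where $H = \OO_X(1)$, and then $\Pic(\xht) = \ZZ \cdot H_2 \oplus \ZZ \cdot \delta$, where $H_2$ is the class induced from the symmetric product (pulling back $H \boxplus H$ via $\rho$) and $2\delta = E$. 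The key numerical input is to locate the natural polarization and the boundary class $E$ intrinsically inside this rank-two lattice using the intersection form together with the canonical class $K_{\xht}$, so that an automorphism $\varphi$ of $\xht$ must fix $E$ up to the obvious symmetries.

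**Reducing to the diagonal and to $X$.**

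Once $\varphi^*E = E$ is established (the class $\delta$ should be the unique, up to sign, primitive class whose associated divisor is not big, or is characterized by contractibility via $\rho$; alternatively one pins it down by the structure of the nef and effective cones, or by noting $E$ is the only prime divisor with a $\PP^{n-1}$-bundle structure incompatible with the other generator), $\varphi$ preserves the support of $E$ and hence descends to an automorphism of the pair $(\xst, \Delta)$, and then lifts/descends to an automorphism $\psi$ of $X^2$ commuting with the $\mathfrak{S}_2$-action and preserving the diagonal. The diagonal-preserving property forces $\psi$ to be either of the form $(x,y) \mapsto (\alpha(x), \alpha(y))$ or $(x,y)\mapsto(\alpha(y),\alpha(x))$ for a single automorphism $\alpha$ of $X$ — this is a standard rigidity fact for $X^2$ when $X$ is not itself a product (which holds here as $X$ has Picard rank one), extracted from the action of $\varphi$ on $H^*(\xht)$ or directly from the structure of $\Aut(X^2)$. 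Finally, by a theorem of Matsumura–Monsky type (or just Lefschetz again together with $d \geq 2$, $N \geq 4$), $\Aut(X) = \Aut(\PP^N, X) = \PGL$-subgroup, and in any case $\alpha \in \Aut(X)$ induces a natural automorphism $\alpha^{[2]}$ of $\xht$, which must coincide with $\varphi$; hence $\varphi$ is natural.

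**Main obstacle.**

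The crux — and the step I expect to occupy most of the argument — is the \emph{intrinsic characterization of the exceptional divisor class} $E$ (equivalently $\delta$) inside $\Pic(\xht)$, i.e.\ proving $\varphi^*E = E$ without a priori knowing $\varphi$ is natural. The naive hope that $E$ is distinguished by being the unique effective non-big class fails in general: on a Fano or general-type hypersurface both generators $H_2$ and $E$ can behave subtly, and the Hilbert–Chow contraction is not obviously the only extremal contraction of $\xht$. I would handle this by a case analysis on the degree $d$: when $d$ is such that $\xht$ is Fano or has trivial/anti-effective canonical bundle one can use the cone theorem and classify extremal rays (the Hilbert–Chow ray being identified by the dimension of its exceptional locus, which is $2n-2 \neq n-1$ unless $n$ is small, so a dimension count isolates it); when $X$ is of general type one instead uses that $K_{\xht}$ is effective with stable base locus exactly $E$, or that the canonical map contracts precisely $E$, so $\varphi$ preserves it automatically. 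A secondary obstacle is dealing with low-degree hypersurfaces (quadrics and cubics) where extra automorphisms of $X$ or of $\xht$ might appear — quadrics have large automorphism groups and one must check no "sporadic" non-natural automorphism of the Hilbert square sneaks in; I would treat these separately, likely using the explicit description of $\xht$ for a quadric (it relates to an orthogonal Grassmannian / isotropic flag variety) and checking the claim by hand there.
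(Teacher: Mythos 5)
Your overall architecture matches the paper's: reduce to showing that every automorphism acts as the identity on $\Pic(\xht)$, then descend through the Hilbert--Chow morphism to $\xst$, lift to $X\times X$ minus the diagonal via the universal cover, and use the Picard-rank-one rigidity of $X\times X$ to produce a single $\alpha\in\Aut(X)$ with $\varphi=\alpha^{[2]}$. That reduction is essentially the paper's Proposition 3.2, and your remark that the crux is pinning down the class of the exceptional divisor is exactly right.

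The gap is in how you propose to pin it down. The missing idea is the second extremal contraction itself: the morphism $\psi\colon\xht\to\Gr(2,N+1)$ sending a length-two subscheme to the line it spans, whose Stein factorization gives the second boundary ray of $\Nef(\xht)$ whenever $X$ contains a line (guaranteed for Fano hypersurfaces since $d\le N\le 2N-3$). Without this explicit construction your ``cone theorem plus dimension count'' cannot be run: for $4\le d\le N$ the Hilbert square $\xht$ is not Fano, so the cone theorem does not hand you the second ray, and in any case a dimension count on exceptional loci requires knowing what the second contraction contracts --- namely the locus of $z$ with $\ell_z\subset X$, of dimension $\dim F(X)+2$, which is controlled only via nontrivial input on the Fano scheme of lines (Beheshti's theorem that $\dim F(X)=2N-d-3$ for $d\le 8$). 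Moreover, two of your specific mechanisms fail: for $d=2$ both contractions are divisorial, so no dimension count on exceptional loci separates the rays (the paper distinguishes them by their non-isomorphic images); and for $N=4$, $d=3,4$ the nontrivial fibers of both contractions are $\PP^2$'s, so one must instead use that the second contraction is small, hence its ray lies in the interior of $\Mov(\xht)$ while the Hilbert--Chow ray lies on its boundary. Your sketch for the $K_X$ nef range (stable base locus of $K_{\xht}$ equal to the exceptional divisor) is plausible but unproved as stated; the paper instead derives a contradiction by showing that if the two rays were exchanged then $\pm K_{\xht}$ would have to be ample, which is impossible. As written, the proposal identifies the right obstacle but does not supply the geometric input needed to overcome it.
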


Combined with the main results of \cite{MM64}, we obtain the following.

\begin{corollary}\label{cor} Let $N\geq 4$ be a positive integer. Let $X\subset \PP^N$ be a smooth projective hypersurface of degree $d \geq 3$. Then the automorphism group $\Aut(\xht)$ of $\xht$ is finite. If in addition $X$ is general\footnote{``General'' means lying in some Zariski-dense open subset of the parameter space.}, then $\Aut(\xht)$ is trivial.
\end{corollary}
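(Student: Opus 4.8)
The plan is to reduce every statement to the structure of $\Aut(X)$, combining Theorem~\ref{main} with the Matsumura--Monsky description of automorphisms of smooth hypersurfaces. By functoriality of the Hilbert scheme, each $\phi\in\Aut(X)$ induces an automorphism $\phi^{[2]}$ of $\xht$, which yields a group homomorphism $\Aut(X)\to\Aut(\xht)$. This homomorphism is injective: if $\phi^{[2]}=\id$, then for any two distinct points $x,y\in X$ the length-two subscheme $\{x,y\}$ is fixed, so $\{\phi(x),\phi(y)\}=\{x,y\}$ as subsets, and letting $y$ range over the (infinite) set $X\setminus\{x\}$ forces $\phi(x)=x$ for all $x$, i.e. $\phi=\id$. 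By Theorem~\ref{main} the homomorphism is surjective, so $\Aut(\xht)\cong\Aut(X)$, and it suffices to prove both assertions for $\Aut(X)$. (Strictly speaking, surjectivity alone would suffice, as it already gives $|\Aut(\xht)|\le|\Aut(X)|$.)

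For the finiteness assertion I would invoke the main theorem of \cite{MM64}: since $X$ is a smooth hypersurface in $\PP^N$ of degree $d\geq 3$ with $\dim X=N-1\geq 3$, we lie outside the short list of exceptional pairs, so every automorphism of $X$ is the restriction of a linear automorphism of $\PP^N$ preserving $X$, and the group of such linear automorphisms is finite. Hence $\Aut(X)$, and with it $\Aut(\xht)$, is finite.

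For the second assertion it remains to check that a general $X$ of degree $d\geq3$ carries no nontrivial automorphism, equivalently no nontrivial linear automorphism. In the parameter space $\PP\bigl(H^0(\PP^N,\OO_{\PP^N}(d))\bigr)$, for each fixed nontrivial $g\in\PGL_{N+1}$ the locus $\{[X]:g(X)=X\}$ is a proper closed subset, since $g$ acts nontrivially on this projective space for $d\geq1$. The one step that is not mere bookkeeping on top of the cited results is to handle all $g$ simultaneously: one reduces to finitely many $g$ --- for instance to $g$ of prime order, with the order bounded in terms of $N$ and $d$ (a standard boundedness fact, also available through \cite{MM64}) --- so that the locus of hypersurfaces admitting a nontrivial automorphism becomes a finite union of proper closed subsets, hence proper. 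A general $X$ then has $\Aut(X)=\{\id\}$, so $\Aut(\xht)=\{\id\}$ by the isomorphism of the first paragraph. The main obstacle in the whole argument is precisely this boundedness reduction; everything else is a direct combination of Theorem~\ref{main} with \cite{MM64}.
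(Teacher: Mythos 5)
Your proposal is correct and follows essentially the same route as the paper: reduce to $\Aut(X)$ via Theorem~\ref{main} and then invoke \cite{MM64} for both finiteness and generic triviality. The paper simply cites \cite[Page 347]{MM64} wholesale for both facts, whereas you additionally spell out the (unneeded but harmless) injectivity of $\Aut(X)\to\Aut(\xht)$ and sketch the boundedness/constructibility argument behind generic triviality, which the cited reference already supplies.
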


In order to achieve Theorem \ref{main}, it is enough to show that $\Aut(\xht)$ acts on the nef cone $\Nef(\xht)$ of $\xht$ identically by general strategy as in \cite{BOR20, Ha19}. We will study the extremal contractions induced by two rays of the boundary of $\Nef(\xht)$. 
Clearly one contraction is precisely the Hilbert--Chow morphism $\xht \to \xst$, where $\xst$ denotes the second symmetric product of $X$. We first handle the case where $K_X$ is nef (see Lemma \ref{nef}). For the case where $X$ is Fano, we will construct the second contraction using the lines on hypersurfaces and investigate the property of this contraction (see Lemmas \ref{type} and \ref{fano}).

After some preliminaries in Section \ref{prel} and reductions in Section \ref{reduc}, we will prove Theorem \ref{main} in Section \ref{prof}. Some examples are given in Section \ref{exam}.

\medskip\noindent \textbf{Acknowledgements.} I thank Professor J{\o}rgen Vold Rennemo for suggesting Lemma \ref{nef} and an improvement of Lemma \ref{fano} which strengthens the main result of this paper. I also thank Professor Keiji Oguiso for suggestions, Professor Klaus Hulek for his help, and the referee for comments. This work is partially supported by Leading Graduate Course for Frontiers of Mathematical Sciences and Physics.


\section{Preliminaries}\label{prel}

We work over the field $\mathbb{C}$ of complex numbers. Let $X$ be a smooth projective variety of dimension $n$. We denote by $\Aut(X)$ the automorphism group of $X$ and by $\Pic(X)$ the Picard group of $X$. We denote by $\Nef(X)$, $\Mov(X)$, and $\Eff(X)$ the nef cone, the movable cone, and the effective cone of $X$ which are generated by numerical classes of nef divisors, movable divisors and effective divisors on $X$, respectively. Recall that a divisor is called \textit{movable}, if its stable base locus has codimension at least two. We refer to \cite{De01, Ma02} for the knowledge of birational geometry.

\subsection{} We denote by $\xht$ the Hilbert square of $X$, i.e., the Hilbert scheme of zero dimensional subschemes of length two of $X$. Then $\xht$ is a smooth projective variety of dimension $2n$. One can construct $\xht$ as follows. Let $\Gamma \subset X \times X$ be the diagonal and $\Bl_{\Gamma}(X\times X)$ the blowup of $X\times X$ along $\Gamma$. The natural involution $\theta$ on $X \times X$ lifts to the blowup and the quotient is canonically isomorphic to the Hilbert square $\xht$ of $X$. One can also regard $\xht$ as the blowup of $(X \times X)/\theta$ along the image of the diagonal $\Gamma$ in the second symmetric product $\xst:= (X \times X)/\theta$ of $X$. The morphism $\xht \to \xst$ is usually called the \textit{Hilbert--Chow morphism}, and we denote by $B$ its exceptional divisor.

Assume that $h^1(X, \OO_X) = 0$, then we have the following isomorphism
\[ \Pic(\xht) \cong \Pic(X) \oplus \ZZ\cdot \frac{B}{2}. \]
Here we note that $B/2$ is a divisor on $\xht$. For a divisor $H \in \Pic(X)$, denote by $\widetilde{H} \in \Pic(\xht)$ the image of $H$ under this isomorphism. Then we have the following formula about canonical divisors:
\[ K_{\xht} = \widetilde{K_X} + \frac{n - 2}{2} B. \]
In fact, let $E$ be the exceptional divisor of the blowup $p: \Bl_{\Gamma}(X\times X) \to X \times X$, and let $q: \Bl_{\Gamma}(X\times X) \to \xht$ be the quotient map. By \cite[(97) and (98) in Page 22]{BFR20}, we have 
\[ K_{\Bl_{\Gamma}(X\times X)} = q^{\ast}(K_{\xht} + B/2) \ \text{ and } \ q^{\ast}(B/2) = E. 
\]
We write $K_{\xht} = \widetilde{H} + a B/2$ with $H \in \Pic(X)$. Then $K_{\Bl_{\Gamma}(X\times X)} = q^{\ast} \big( \widetilde{H} + (a+1) B/2 \big)$. On the other hand, we have $K_{\Bl_{\Gamma}(X\times X)} = p^{\ast}K_{X\times X} + (n - 1) E$. It is then not difficult to see that $H = K_X$ and $a = n - 2$. 

An automorphism $f \in \Aut(X)$ induces an automorphism $\fht \in \Aut(\xht)$. In particular, there is a natural inclusion $\Aut(X) \subset \Aut(\xht)$. An automorphism $\sigma \in \Aut(\xht)$ is called \textit{natural}, if $\sigma = \fht$ for some automorphism $f \in \Aut(X)$.

\subsection{} Let us collect some facts about lines on hypersurfaces. Let $X \subset \PP^N$ be a projective variety. We denote by $F(X)$ the Hilbert scheme of lines on $X$, which is a subscheme of the Grassmannian of lines in $\PP^N$ and is called the \textit{Fano scheme of lines} on $X$. We refer to \cite[Theorem 8]{BV78} and \cite[Theorem V.4.3]{Ko96} for the following result (see also \cite{AK77, De01, EH16}).

\begin{theorem}\label{basic} Let $X \subset \PP^N$ be a smooth projective hypersurface of degree $d$. Then the following hold.

\medskip $\mathrm{(1)}$ $F(X)$ is non-empty if $d \leq 2N - 3$.

\medskip $\mathrm{(2)}$ $F(X)$ is empty for general $X$ if $d > 2N - 3$.

\medskip $\mathrm{(3)}$ $F(X)$ is smooth of dimension $2N - 3 - d$ for general $X$ if $d \leq 2N - 3$. 
\end{theorem}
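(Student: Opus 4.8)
The plan is to run the standard incidence-correspondence argument over the parameter space of all degree-$d$ hypersurfaces in $\PP^N$ (I assume $N\geq 3$; the case $N=2$ forces $d\leq 1$ and is trivial). Write $\PP^M$, $M=\binom{N+d}{d}-1$, for the space of nonzero degree-$d$ forms up to scalar, let $\Gr:=\Gr(1,N)$ be the Grassmannian of lines in $\PP^N$, of dimension $2N-2$, and set
\[ \Phi:=\{(\ell,[F])\in \Gr\times\PP^M : \ell\subset V(F)\}. \]
For a fixed line $\ell$ the condition $\ell\subset V(F)$ is the vanishing of the $d+1$ coefficients of the binary form $F|_\ell$, a codimension-$(d+1)$ linear condition on $\PP^M$; globalising, the first projection $\Phi\to\Gr$ is a $\PP^{M-d-1}$-bundle. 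Hence $\Phi$ is smooth and irreducible of dimension
\[ \dim\Phi=(2N-2)+(M-d-1)=M+(2N-3-d). \]
Let $p\colon\Phi\to\PP^M$ be the second projection. It is proper (a morphism from a projective variety) and its fibre over $[F]$ is, by construction, $F(V(F))$; in particular $F(X)=p^{-1}([X])$.

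Part (2) is then immediate: if $d>2N-3$ then $\dim\Phi<M=\dim\PP^M$, so $p$ is not dominant, its image is a proper closed subvariety of $\PP^M$, and a general hypersurface $X$ (which we may also take smooth, the smooth locus being dense open) lies outside it, so $F(X)=\emptyset$.

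For parts (1) and (3) assume $d\leq 2N-3$, so $r:=2N-3-d\geq 0$. The crux is to show $p$ is dominant. By upper semicontinuity of fibre dimension on the irreducible variety $\Phi$, it suffices to exhibit one pair $(\ell_0,[X_0])\in\Phi$ with $\dim_{[\ell_0]}F(X_0)\leq r$: then the locus in $\Phi$ where the fibre has dimension $\leq r$ is dense open, forcing $\dim\overline{p(\Phi)}\geq \dim\Phi-r=M$. Take $X_0$ a Fermat hypersurface $\{x_0^d+\dots+x_N^d=0\}$ (which is smooth) and $\ell_0$ the explicit line $\{x_0=\eta x_1,\ x_2=\eta x_3,\ x_4=\dots=x_N=0\}$ with $\eta^d=-1$, which lies on $X_0$ and uses $N\geq 3$. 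Using the conormal exact sequence $0\to N_{\ell_0/X_0}\to N_{\ell_0/\PP^N}\to N_{X_0/\PP^N}|_{\ell_0}\to 0$, i.e. $0\to N_{\ell_0/X_0}\to\OO_{\PP^1}(1)^{\oplus(N-1)}\to\OO_{\PP^1}(d)\to 0$, one computes that $N_{\ell_0/X_0}$ is balanced, so $h^0(N_{\ell_0/X_0})=\deg N_{\ell_0/X_0}+\operatorname{rk}N_{\ell_0/X_0}=(N-1-d)+(N-2)=r$; since $\dim_{[\ell_0]}F(X_0)\leq\dim T_{[\ell_0]}F(X_0)=h^0(N_{\ell_0/X_0})$, we are done (alternatively, cite the normal-bundle analysis in \cite[V.4.3]{Ko96}). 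With $p$ dominant in hand, part (1) follows by properness: a proper dominant morphism is surjective, so $F(X)=p^{-1}([X])\neq\emptyset$ for \emph{every} $[X]$, in particular for every smooth hypersurface of degree $d\leq 2N-3$. For part (3), $p$ dominant with $\Phi,\PP^M$ irreducible gives $\dim F(X)=\dim\Phi-M=r$ for all $X$ in a dense open subset of $\PP^M$; and since $\Phi$ is smooth and we are in characteristic zero, generic smoothness shows $F(X)$ is smooth for general $X$. Intersecting with the locus of smooth hypersurfaces yields the claim.

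The main obstacle is precisely the middle step: showing $p$ is dominant, equivalently that a general hypersurface with $d\leq 2N-3$ contains a line, equivalently that the Fermat line $\ell_0$ deforms in $X_0$ in only the expected number of directions. The raw dimension count $\dim\Phi\geq M$ does not suffice on its own, because the fibres of $p$ can be positive-dimensional, so one genuinely needs the normal-bundle estimate $h^0(N_{\ell_0/X_0})\leq 2N-3-d$ at one explicit point. Everything else — nonemptiness for \emph{all} smooth $X$ from properness, smoothness for general $X$ from generic smoothness, and emptiness for general $X$ when $d$ is large from the dimension drop — is then formal.
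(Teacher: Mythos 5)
The paper gives no proof of this theorem; it cites \cite{BV78} and \cite{Ko96}, whose underlying argument is the same incidence-correspondence setup you use. Your construction of $\Phi$, the properness argument for (1), the dimension drop for (2), and the generic-smoothness deduction of (3) are all standard and fine \emph{conditional on dominance of $p$} --- but that is exactly the step you flag as the crux, and your witness for it fails. For the Fermat hypersurface $X_0$ and the coordinate-pair line $\ell_0$, the surjection $N_{\ell_0/\PP^N}\cong\OO_{\PP^1}(1)^{\oplus(N-1)}\to\OO_{\PP^1}(d)$ is given (in the basis of $I_{\ell_0}$ dual to $x_0-\eta x_1,\ x_2-\eta x_3,\ x_4,\dots,x_N$) by $\bigl(d\eta^{d-1}s^{d-1},\ d\eta^{d-1}t^{d-1},\ 0,\dots,0\bigr)$, since the partials $\partial F/\partial x_j=dx_j^{d-1}$ for $j\geq 4$ vanish on $\ell_0$ when $d\geq 2$. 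Hence
\[ N_{\ell_0/X_0}\;\cong\;\OO_{\PP^1}(1)^{\oplus(N-3)}\oplus\OO_{\PP^1}(2-d), \]
which is maximally \emph{un}balanced, and $h^0(N_{\ell_0/X_0})=2(N-3)+\max(0,3-d)$. This equals $r=2N-3-d$ only for $d\leq 3$; for $d\geq 4$ it exceeds $r$ by $d-3$, so your tangent-space bound does not give $\dim_{[\ell_0]}F(X_0)\leq r$. Worse, $\ell_0$ lies in the $(N-3)$-dimensional family of lines joining $\{x_0^d+x_1^d=0\}\subset\{x_2=\dots=x_N=0\}$ to the Fermat in $\{x_0=x_1=0\}$ --- precisely the family the paper invokes in the Example following this theorem to show that the generality hypothesis in (3) cannot be dropped --- so $\dim_{[\ell_0]}F(X_0)\geq N-3$, which is strictly greater than $r$ for all $d>N$. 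So the inequality you need at this point is not merely unproven but false in part of the range $d\leq 2N-3$, and the Fermat is the worst possible choice of witness.

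The standard repair, and the route behind \cite[Theorem 8]{BV78} and \cite[Theorem V.4.3]{Ko96} (see also \cite{De01, EH16}), avoids choosing a witness: realize $F(X)$ as the zero locus in $\Gr(2,N+1)$ of the section of the rank-$(d+1)$ globally generated bundle $\mathrm{Sym}^d S^{\vee}$ (with $S$ the tautological subbundle) determined by the equation of $X$. Any nonempty such zero locus has every component of dimension at least $(2N-2)-(d+1)=2N-3-d$, and nonemptiness for \emph{every} $X$ with $d\leq 2N-3$ follows from the nonvanishing of the top Chern class $c_{d+1}(\mathrm{Sym}^d S^{\vee})$, which is a concrete positive computation; this gives (1) and in particular the dominance of $p$. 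A Bertini--Kleiman argument for sections of a globally generated bundle (or, as you say, generic smoothness applied to the smooth $\Phi$ once dominance is known) then gives (2) and (3). If you insist on a witness-based proof of dominance, you must produce a pair $(\ell_0,X_0)$ with $h^1(N_{\ell_0/X_0})=0$, which for large $d$ requires a genuinely non-symmetric hypersurface and is the actual content being skipped.
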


\begin{example} The Fermat hypersurface $X^d_N$ is the hypersurface in $\PP^N$ defined by the equation
\[ x^d_0 + x^d_1 + \cdots + x^d_N = 0. \]
It is smooth and contains lines. Indeed, it contains a $(N-3)$-dimensional family of lines. See for instance, \cite[2.14]{De01}. Therefore, one can see that the generality condition in Theorem \ref{basic} is indispensable.
\end{example}

The conjecture due to Debarre and de Jong predicts that $F(X)$ always has the expected dimension $2N - d - 3$ for an arbitrary smooth Fano hypersurface $X \subset \PP^N$ (see e.g., \cite[Conjecture 6.39]{EH16}). This conjecture is open in general whereas the following partial result is known.

\begin{theorem}[{\cite[Corollary 3.3]{Be14}}]\label{bemain} Let $X \subset \PP^N$ be a smooth Fano hypersurface of degree $d \leq 8$. Then $F(X)$ has the expected dimension $2N - d - 3$.
\end{theorem}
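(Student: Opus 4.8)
The plan is to prove the equivalent statement that $F(X)$ has no component of dimension strictly larger than $2N-d-3$; we may assume $d\ge3$, the cases $d\le2$ being immediate. One inequality is free: $F(X)$ is the zero locus in $\Gr(1,\PP^N)$ of the section, given by the equation of $X$, of the rank-$(d+1)$ bundle with fibre $H^0(\ell,\OO_\ell(d))$ over a line $\ell$, so every component has dimension at least $\dim\Gr(1,\PP^N)-(d+1)=2(N-1)-(d+1)=2N-d-3$. Only the upper bound is at issue, and by deformation theory a component $\Sigma$ with $\dim\Sigma>2N-d-3$ has $h^1(\ell,N_{\ell/X})\ge\dim\Sigma-(2N-d-3)>0$ at its general point $[\ell]$, since $h^0(N_{\ell/X})=\chi(N_{\ell/X})+h^1(N_{\ell/X})=(2N-d-3)+h^1(N_{\ell/X})$ bounds $\dim_{[\ell]}F(X)$ from above. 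The normal bundle sequence $0\to N_{\ell/X}\to N_{\ell/\PP^N}=\OO_{\PP^1}(1)^{\oplus(N-1)}\to\OO_{\PP^1}(d)\to0$ (exact because $X$ is smooth along $\ell$) shows that $N_{\ell/X}\cong\bigoplus_{i=1}^{N-2}\OO_{\PP^1}(a_i)$ with every $a_i\le1$ and $\sum a_i=N-d-1$, and $h^1(N_{\ell/X})>0$ forces some $a_i\le-2$. So the theorem reduces to: for $X$ smooth of degree $3\le d\le8$, the family of lines $\ell\subset X$ with an unbalanced normal bundle (a summand $\OO(-2)$ or more negative) cannot fill up a component of $F(X)$ of excess dimension.

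I would analyse this locally at a general point $x$ of the subvariety $Z$ swept out by such a bad family. The lines of $X$ through $x$ form the scheme $F_x\subset\PP(T_xX)\cong\PP^{N-2}$ cut out by the fundamental forms $f_2,\dots,f_d$ of $X$ at $x$ — in affine coordinates centred at $x$ with equation $F=F_1+\cdots+F_d$ one has $T_xX=\{F_1=0\}$ and $F_x=\{F_2|_{T_xX}=\cdots=F_d|_{T_xX}=0\}$ — of expected dimension $(N-2)-(d-1)=N-d-1$; excess dimension of $\Sigma$ forces excess dimension of $F_x$ for $x$ general on $Z$. Since $X$ is smooth of degree $\ge3$, its restricted Hessian $f_2$ is nondegenerate on a dense open subset of $X$, so for general such $x$ the scheme $F_x$ is cut out inside a \emph{smooth} quadric $Q_x\subset\PP^{N-2}$ by the $d-2$ forms $f_3,\dots,f_d$. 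The crucial step is to show that an excess-dimensional component $W\subseteq F_x$ must contain a linear space $\PP^m\subset\PP(T_xX)$ with $m$ large relative to $d$: the union of the lines of $X$ through $x$ with tangent direction in $\PP^m$ is then a linear space $\PP^{m+1}\subseteq X$, and a smooth hypersurface of degree $\ge2$ in $\PP^N$ contains no $\PP^k$ with $2k\ge N$ (otherwise the $N-k$ partials of $F$, of degree $d-1$, would have a common zero on $\PP^k$ and $X$ would be singular there). Weighing the lower bound for $m$ against $2(m+1)\le N-1$ yields the contradiction.

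It is convenient to set up an induction on $N$ to cut down the casework. A general hyperplane section $X\cap H\subset\PP^{N-1}$ is again a smooth hypersurface of degree $d$, and the incidence variety $\{(\ell,H):\ell\subset X\cap H\}$ — fibred over $F(X)$ with $(N-2)$-dimensional fibres and over the dual projective space with fibres $F(X\cap H)$ — gives $\dim F(X)=\dim F(X\cap H)+2$ provided general hyperplane sections of $X$ still contain lines, which holds throughout the Fano range by Theorem \ref{basic}(1). This reduces the theorem to the index-one cases $X_d\subset\PP^d$ with $3\le d\le8$, a finite list in which one enumerates and rules out the extremal configurations for the argument above. The bound $d\le8$ enters exactly here: extracting a large enough linear space from a small-codimension subvariety of a smooth quadric which is in addition killed by forms of degrees $3,\dots,d$ — equivalently, bounding the rank drop of the map $H^1(\OO_{\PP^1}(-d-2))\to H^1(\OO_{\PP^1}(-3))^{\oplus(N-1)}$ that controls $h^0(N^\vee_{\ell/X}(-2))$ over the bad family — is only strong enough for small $d$.

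The step I expect to be the main obstacle is precisely this quantitative core: controlling the geometry of the excess component $W$ of $F_x$ (or the splitting type of $N_{\ell/X}$) tightly enough to force a forbidden linear subspace of $X$. The delicate situations are when the bad lines sweep out a proper subvariety $Z\subsetneq X$ rather than all of $X$, where one must separately bound $\dim Z$ and study the lines through its general point, and when $F_x$ (or $W$) is reducible with components of several dimensions, so that no single dimension count closes the argument. Everything else amounts to manipulation of the two exact sequences displayed above.
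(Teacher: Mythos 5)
The paper does not prove this statement at all: it is imported wholesale from the literature (Beheshti, [Be14, Corollary~3.3]), so there is no internal proof to compare against. Your proposal is therefore an attempt to reprove a known but genuinely difficult theorem, and as written it has a real gap rather than being a complete alternative argument.

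The setup is correct and standard: the lower bound $\dim F(X)\ge 2N-d-3$ via the section of the rank-$(d+1)$ bundle on the Grassmannian, the equivalence of excess dimension with $h^1(N_{\ell/X})>0$ at a general point of a component, the splitting type $N_{\ell/X}\cong\bigoplus\OO_{\PP^1}(a_i)$ with $a_i\le 1$ and $\sum a_i=N-d-1$, the description of $F_x$ by the fundamental forms, and the fact that a smooth hypersurface of degree $\ge 2$ contains no $\PP^k$ with $2k\ge N$. But the step you yourself flag as ``the main obstacle'' --- forcing an excess-dimensional component of $F_x$ (or the locus of lines with unbalanced normal bundle) to produce a forbidden linear subspace of $X$ --- is precisely the content of the theorem, and it is exactly what separates the solved range $d\le 8$ from the open Debarre--de Jong conjecture. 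No argument is given for it; the sentence ``is only strong enough for small $d$'' is an assertion, not a proof, and the two scenarios you identify as delicate (bad lines sweeping out a proper subvariety $Z\subsetneq X$, and $F_x$ reducible with components of several dimensions) are where Beheshti's actual proof does its real work, by a different mechanism: she bounds the dimension of the family of lines on the swept-out subvariety $Y$ in terms of $\dim Y$, rather than extracting linear spaces from $F_x$. The hyperplane-induction reduction to the index-one case also needs more care than stated: for a component of $F(X)$ whose lines all lie in a special family of hyperplanes (e.g.\ all passing through a point), the incidence correspondence need not dominate the dual projective space, and the relevant hyperplane sections need not be smooth, so the identity $\dim F(X)=\dim F(X\cap H)+2$ cannot be applied componentwise without further argument. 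In short, the proposal correctly reduces the theorem to its hard core and then stops; since the paper's own ``proof'' is a citation, the honest course here is likewise to cite [Be14] rather than to sketch an incomplete reproof.
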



\section{Reductions}\label{reduc}

In this section, we reduce Theorem \ref{main} to Proposition \ref{propred}. The proof is identical with \cite[Section 3]{BOR20} (also \cite{Ha19}) where the case of Hilbert schemes of points on surfaces is considered. 

\begin{lemma}\label{bor6} Let $f: X \to Y$ be a morphism of projective varieties, where $Y$ is normal and $f$ has connected fibers. Let $\LL$ be an ample line bundle on $Y$. Then for any automorphism $\sigma: X \to X$ such that $\sigma^{\ast}f^{\ast}\LL = f^{\ast}\LL$, there exists an isomorphism $\tau: Y \to Y$ such that $\tau \circ f = f \circ \sigma$.
\end{lemma}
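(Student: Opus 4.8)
The plan is to use the classical fact that a morphism with connected fibers to a normal variety realizes $Y$ as a quotient in a suitable categorical sense — concretely, that $f_*\OO_X = \OO_Y$ and that $Y$ is recovered as $\operatorname{Proj}$ of a section ring. First I would observe that since $f$ has connected fibers and $Y$ is normal, by Zariski's main theorem (or Stein factorization) we have $f_*\OO_X = \OO_Y$. Now fix a large integer $m$ so that $\LL^{\otimes m}$ is very ample and $Y$ is embedded in $\PP(H^0(Y,\LL^{\otimes m})^\vee)$. By the projection formula and $f_*\OO_X = \OO_Y$, pullback induces an isomorphism $H^0(Y,\LL^{\otimes k}) \xrightarrow{\sim} H^0(X, f^*\LL^{\otimes k})$ for every $k \ge 0$; hence $\bigoplus_k H^0(X, f^*\LL^{\otimes k})$ is canonically identified with the homogeneous coordinate ring of $Y \subset \PP(H^0(Y,\LL^{\otimes m})^\vee)$ (after passing to the Veronese in degrees divisible by $m$, if one wants very-ampleness), and $Y = \operatorname{Proj}$ of this ring.

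The key step is then: the hypothesis $\sigma^*f^*\LL \cong f^*\LL$ means $\sigma$ acts on the graded ring $R := \bigoplus_k H^0(X, f^*\LL^{\otimes k})$ by a graded ring automorphism (once we fix an isomorphism $\sigma^*f^*\LL \cong f^*\LL$, which exists by hypothesis and is unique up to scalar since $X$ is projective hence $H^0(X,\OO_X^\times)=\CC^\times$; the scalar ambiguity disappears upon taking $\operatorname{Proj}$). Via the identification $R \cong$ coordinate ring of $Y$, this graded automorphism is exactly a graded automorphism of the coordinate ring of $Y$, which by taking $\operatorname{Proj}$ yields an automorphism $\tau: Y \to Y$. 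To see that $\tau \circ f = f \circ \sigma$, note that $f$ corresponds on the level of rings to the identity map $R \to R$ under our identification, and one checks the square commutes by tracing through $\operatorname{Proj}$ of the obvious commuting square of graded rings; equivalently, $\tau$ is characterized as the unique map with $\tau^* \LL^{\otimes k} \cong \LL^{\otimes k}$ inducing $\sigma$ on sections, and then $f \circ \sigma$ and $\tau \circ f$ both pull $\LL^{\otimes k}$ back to $f^*\LL^{\otimes k}$ compatibly, so they agree.

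I expect the main obstacle to be bookkeeping rather than substance: one must be careful that $\sigma$ need not fix $f^*\LL$ on the nose but only up to isomorphism, so the graded-ring automorphism is only defined after a choice, and different choices differ by scaling each graded piece — harmless for $\operatorname{Proj}$ but needing a sentence. A cleaner alternative, which I would probably adopt to avoid the $\operatorname{Proj}$ formalism, is the Stein-factorization argument used in \cite{BOR20}: the composite $f \circ \sigma: X \to Y$ has connected fibers and satisfies $(f\circ\sigma)^*\LL \cong \sigma^* f^* \LL \cong f^* \LL$; since $\LL$ is ample and both $f$ and $f \circ \sigma$ contract exactly the curves on which $f^*\LL$ has degree zero, the two morphisms $f, f\circ\sigma : X \to Y$ contract the same curves, hence factor through the same quotient, and because $Y$ is normal with $f_*\OO_X = \OO_Y$ the induced birational self-map $\tau$ of $Y$ is an isomorphism with $\tau \circ f = f \circ \sigma$. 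Either way the proof is short; the content is entirely that a contraction to a normal variety is intrinsic to the linear system it contracts.
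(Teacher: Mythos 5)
Your proposal is correct, and it is essentially the argument behind the result the paper invokes: the paper gives no proof of its own here, simply citing \cite[Lemma 6]{BOR20}, whose proof is exactly your main line ($f_*\OO_X=\OO_Y$, identify $\bigoplus_k H^0(Y,\LL^{\otimes k})$ with $\bigoplus_k H^0(X,f^*\LL^{\otimes k})$, and take $\operatorname{Proj}$ of the induced graded automorphism, with the scalar ambiguity killed by $\operatorname{Proj}$). Your alternative via the rigidity lemma is also fine, but there is nothing here that diverges from the cited source.
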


\begin{proof} See \cite[Lemma 6]{BOR20}. 
\end{proof}

Similar results as the following first appeared in \cite[Theorem 1]{BS12} and later in \cite[Theorem 1.4]{Ha17}.

\begin{proposition}\label{propred} Let $X$ be a simply connected smooth projective variety of dimension $\geq 2$ and Picard number $1$. Assume that $\Pic(X) = \ZZ \cdot H$, where $H$ is an ample divisor. Let $\sigma \in \Aut(\xht)$ be an automorphism satisfying $\sigma^{\ast}\tih = \tih$ and $\sigma^{\ast}B = B$. Then $\sigma$ is natural.
\end{proposition}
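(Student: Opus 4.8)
The plan is to show that such a $\sigma$ fixes every nef class on $\xht$, and then feed this into Lemma \ref{bor6} applied to the Hilbert--Chow morphism. Since $X$ is simply connected, $h^1(X,\OO_X)=0$, so $\Pic(\xht)\cong \ZZ\cdot \widetilde{H}\oplus\ZZ\cdot(B/2)$ is of rank two. The hypothesis already tells us $\sigma^\ast$ acts trivially on this lattice, hence $\sigma^\ast$ fixes $\Nef(\xht)$ pointwise and in particular fixes the Hilbert--Chow ray. Concretely, the class $\widetilde{H}$ is the pullback $p^\ast\LL$ of an ample class $\LL$ on $\xst$ under the Hilbert--Chow morphism $p\colon\xht\to\xst$ (the class $\widetilde{H}$ is nef, big, and trivial exactly on the fibers of $p$, which are the lines contracted by $p$ over the diagonal). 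Thus $\sigma^\ast p^\ast\LL = p^\ast\LL$.

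Now apply Lemma \ref{bor6}: $p\colon\xht\to\xst$ has connected fibers and $\xst=(X\times X)/\theta$ is normal, so there is an automorphism $\tau$ of $\xst$ with $\tau\circ p = p\circ\sigma$. The next step is to lift $\tau$ to $X\times X$. Recall $\xst=(X\times X)/\theta$; the quotient map $q\colon X\times X\to\xst$ is the quotient by the free-outside-the-diagonal involution $\theta$, and $X\times X$ is simply connected (as $X$ is), so $q$ is the universal cover of the smooth locus in a suitable sense — more carefully, $\tau$ lifts to an automorphism $\hat\tau$ of $X\times X$ commuting with $\theta$, because $X\times X\to\xst$ is étale over the complement of the diagonal image and $X\times X$ is simply connected, while the diagonal has codimension $\dim X\geq 2$ so the lift extends by normality of $X\times X$. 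Then I invoke the standard rigidity of products: an automorphism of $X\times X$ either preserves or swaps the two factors, since the two rulings by copies of $X$ are the only two extremal fibration structures (here one uses Picard number one of $X$, which forces the two projections to be the only fiber-space structures up to the factor swap). In either case $\hat\tau = (g,h)$ or $\hat\tau=(g,h)\circ\theta$ for automorphisms $g,h\in\Aut(X)$, and commuting with $\theta$ forces $g=h$. Hence $\hat\tau$ descends to $g^{[2]}$ on $\xht$.

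Finally, $\sigma$ and $g^{[2]}$ induce the same automorphism $\tau$ of $\xst$; since $p$ is birational (it is the blowup of $\xst$ along the diagonal image), two automorphisms of $\xht$ inducing the same automorphism downstairs must agree — equivalently, $\sigma\circ (g^{[2]})^{-1}$ covers the identity of $\xst$ and therefore restricts to the identity on the dense open set where $p$ is an isomorphism, so $\sigma = g^{[2]}$ by continuity. This shows $\sigma$ is natural.

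The main obstacle, and the step deserving the most care, is the lifting of $\tau$ from $\xst$ to $X\times X$ and the subsequent identification $\hat\tau=(g,g)$ (possibly composed with $\theta$): one must handle the branch locus (the diagonal) correctly — using that $X\times X\to \xst$ is étale in codimension one together with simple connectedness and purity of the branch locus — and then genuinely use the Picard-number-one hypothesis to rule out "mixed" automorphisms of $X\times X$ that do not respect the product decomposition. The dimension hypothesis $\dim X\geq 2$ is what makes the codimension-two diagonal argument work, and simple connectedness of $X$ is exactly what is needed to promote the étale-in-codimension-one cover to an honest covering space so that $\tau$ lifts.
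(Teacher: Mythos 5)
Your proposal follows essentially the same route as the paper's proof: descend $\sigma$ to an automorphism $\tau$ of $\xst$ via Lemma \ref{bor6}, lift $\tau$ to $X\times X$ using that $X^2\setminus\Gamma$ is simply connected (with $\Gamma$ of codimension $\geq 2$), and use the Picard-number-one hypothesis to force the lift to respect the product structure, so that commuting with the involution $\theta$ (equivalently, preserving the diagonal) gives $\hat\tau=g\times g$. The only difference is one of detail: where you appeal to ``standard rigidity of products,'' the paper makes this precise by Oguiso's argument, decomposing $f^{\ast}p_i^{\ast}\LL$ in $\Pic(X^2)=\Pic(X)\oplus\Pic(X)$ and using Stein factorization of the associated globally generated line bundles.
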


\begin{proof} Since $\sigma^{\ast}\tih = \tih$, by Lemma \ref{bor6}, there exists an automorphism $\tau : \xst \to \xst$ which fits the following commutative diagram:
\[
\CD
  \xht @>\sigma>> \xht \\
  @V \mathrm{HC} VV @V \mathrm{HC} VV  \\
  \xst @>\tau>> \xst.
\endCD
\]

Since $\tau$ preserves the singular points, $\tau$ preserves the diagonal $\Delta \subset \xst$. Let $\Gamma \subset X^2 := X \times X$ be the diagonal and consider the restriction of the quotient map $\rho : X^2 \setminus \Gamma \to \xst \setminus \Delta$. Since $X$ is simply connected and $\Gamma \subset X^2$ is of codimension at least $2$, $X^2 \setminus \Gamma$ is also simply connected. Hence $\rho$ is the universal covering space of $\xst \setminus \Delta$. Applying the universal lifting property to the morphism $\tau \circ \rho$, we obtain an automorphism $f^{\prime} \in \Aut(X^2 \setminus \Gamma)$ which fits the following commutative diagram:
\[ \CD
  X^2 \setminus \Gamma @>f^{\prime}>> X^2 \setminus \Gamma \\
  @V \rho VV @V \rho VV  \\
  \xst \setminus \Delta @>\tau>> \xst \setminus \Delta.
\endCD
\]
By the argument of \cite[Proposition 12]{BOR20}, $f^{\prime}$ extends to an automorphism $f: X^2 \to X^2$ such that $\tau \circ \rho = \rho \circ f$.

Let $\LL$ be a very ample line bundle on $X$, and let $\LL_i = p_i^{\ast}\LL$ be its pull-back to $X^2$, where $p_i: X \times X \to X$ are the natural projections. Clearly the projection $p_i$ is naturally identified with the morphism associated with the globally generated line bundle $\LL_i$. We consider $f^{\ast}\LL_i$ following the argument of \cite[Theorem 4.1(d)]{Og16}. Since $h^1(X, \OO_X) = 0$, we know that $\Pic(X^2) = \Pic(X) \bigoplus \Pic(X)$. In particular, there exist line bundles $\MM_{j}$ on $X$ such that $f^{\ast}\LL_i \cong p_1^{\ast}\MM_1 \otimes p_2^{\ast}\MM_2$. Moreover, these $\MM_{j}$ are globally generated.

Let $g_j: X \to Y_j$ be the morphism associated with $\MM_j$. Then $p_i \circ f = h \circ (g_1 \times g_2)$ for some isomorphism $h: Y_1\times Y_2 \to X$. Since $X$ is of Picard number $1$, we know that one of $Y_j$ is a point and the other is isomorphic to $X$. Therefore, $f = f_1 \times f_2$ for some $f_i \in \Aut(X)$. Since $\tau \in \Aut(\xst)$ preserves the diagonal $\Delta \subset \xst$, we know that $f \in \Aut(X^2)$ preserves the diagonal $\Gamma \subset X^2$. In particular, $f_1 = f_2$. This implies that $\sigma$ is natural.
\end{proof}


\section{Proof of Theorem \ref{main}}\label{prof}

Let $N \geq 4$ be a positive integer. Let $X \subset \PP^N$ be a smooth projective hypersurface. Then $\dim X = N - 1 \geq 3$ and $\Pic(X) \cong \Pic(\PP^N)$ by the Lefschetz hyperplane theorem. Thus, the Picard number $\rho(\xht) = 2$. Let us consider the nef cone of $\xht$, whose boundary consists of two rays. Let $H$ be a very ample divisor of $X$ which gives the embedding $X \subset \PP^N$. Then $\widetilde{H}$ is a globally generated and big divisor on $\xht$ which is not ample, and $\widetilde{H}$ induces the divisorial contraction $\xht \to \xst$, the Hilbert--Chow morphism. In particular, $\widetilde{H}$ is on the boundary of the nef cone $\Nef(\xht)$ of $\xht$.

Let us first deal with the case where $\deg X \geq N+1$, or equivalently, $K_X$ is nef. We have the following slightly more general result.

\begin{lemma}\label{nef} Let $Z$ be a smooth projective variety of dimension $n \geq 3$ with $h^1(Z, \OO_Z) = 0$ and $\Pic(Z) = \ZZ\cdot L$ for an ample divisor $L$. If $K_Z$ is nef, then every automorphism of $\zht$ acts as the identity on $\Pic(\zht)$.
\end{lemma}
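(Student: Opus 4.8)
The plan is to show that the two extremal rays of $\Nef(\zht)$ are each preserved by any $\sigma \in \Aut(\zht)$, so that $\sigma^\ast$ fixes the corresponding divisor classes and hence acts identically on $\Pic(\zht)\otimes\QQ$ (and on $\Pic(\zht)$ since $\sigma^\ast$ is a lattice automorphism). One ray is spanned by $\widetilde{L}$, the class inducing the Hilbert--Chow morphism $\zht \to \zst$; being the unique face of the nef cone whose contraction is divisorial with exceptional divisor $B$ (equivalently, $\widetilde{L}$ is the unique primitive nef class that is big but not ample and whose null locus is $B$), it is automatically $\sigma^\ast$-invariant. So $\sigma^\ast\widetilde{L} = \widetilde{L}$ and consequently $\sigma^\ast B = B$. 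It remains to pin down the action on the second ray.

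For the second ray I would use the canonical class. Since $K_Z$ is nef and $\Pic(Z) = \ZZ\cdot L$ with $L$ ample, we have $K_Z = aL$ for some integer $a \geq 0$. The formula $K_{\zht} = \widetilde{K_Z} + \tfrac{n-2}{2}B = a\widetilde{L} + \tfrac{n-2}{2}B$ shows $K_{\zht}$ is a nonnegative combination of $\widetilde{L}$ and $B/2$; since $n \geq 3$ the coefficient of $B$ is strictly positive, so $K_{\zht}$ lies in the interior of the cone spanned by $\widetilde{L}$ and $B/2$ and is in particular not proportional to $\widetilde{L}$. The key point is that $\sigma^\ast K_{\zht} = K_{\zht}$ always (the canonical class is an automorphism invariant). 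Combined with $\sigma^\ast\widetilde{L} = \widetilde{L}$, and writing everything in the basis $\{\widetilde{L}, B/2\}$ of $\Pic(\zht)$, the equality $\sigma^\ast\!\left(a\widetilde{L} + (n-2)\tfrac{B}{2}\right) = a\widetilde{L} + (n-2)\tfrac{B}{2}$ together with $(n-2)\neq 0$ forces $\sigma^\ast(B/2) = B/2$. Hence $\sigma^\ast$ fixes both basis vectors and acts as the identity on $\Pic(\zht)$.

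The only step that needs a little care — and the one I would call the main obstacle — is the rigidity of the Hilbert--Chow ray: one must argue that $\widetilde{L}$ (up to scaling) is the only class on the boundary of $\Nef(\zht)$ distinguished in a way that $\sigma^\ast$ must respect, i.e. that $\sigma$ cannot swap the two boundary rays. This follows because the contraction associated to $\widetilde{L}$ is divisorial (it contracts the divisor $B$ to the diagonal), a property intrinsic to the ray; the other boundary ray, whatever it is, cannot have this same numerical/geometric profile, since its contraction would have to contract $B$ as well, contradicting that $B$ spans the complementary ray's null behaviour. Alternatively, and more cleanly, one avoids discussing the second ray's geometry entirely: $\sigma^\ast$ permutes the two extremal rays of the two-dimensional cone $\Nef(\zht)$; if it fixes both we are in the case above; if it swaps them, then $\sigma^\ast\widetilde{L}$ is proportional to the other ray's generator, but then $\sigma^\ast$ does not preserve $B$ — yet $\sigma^\ast K_{\zht} = K_{\zht}$ with $K_{\zht} = a\widetilde{L} + \tfrac{n-2}{2}B$ a fixed interior-ish class pins down the action on the whole lattice, and a short computation shows the swap is incompatible with $n \geq 3$ (for instance because it would send the effective class $B$ to a non-effective one, or because the induced order-two matrix would have to fix $K_{\zht}$ while exchanging the rays, which is impossible once the coefficient of $\widetilde{L}$ in $K_{\zht}$ differs from that of $B/2$ — and if they happened to agree one checks the swap still contradicts nonnegativity). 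Wrapping up: $\sigma^\ast = \id$ on $\Pic(\zht)$, as claimed.
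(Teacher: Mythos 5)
The endgame of your argument is fine: once $\sigma^{\ast}\widetilde{L} = \widetilde{L}$ is known, combining it with $\sigma^{\ast}K_{\zht} = K_{\zht}$ and $K_{\zht} = a\widetilde{L} + \tfrac{n-2}{2}B$, $n-2 \neq 0$, does force $\sigma^{\ast}(B/2) = B/2$ and hence $\sigma^{\ast} = \id$ on $\Pic(\zht)$. The genuine gap is the step you yourself flag as the main obstacle: ruling out that $\sigma^{\ast}$ exchanges the two extremal rays of $\Nef(\zht)$. Neither of your two suggested routes closes it. The ``intrinsic characterization'' route fails because, if $\sigma^{\ast}$ did swap the rays, the second ray would automatically be semiample and would induce a divisorial contraction with exceptional divisor $\sigma(B)$ --- a priori a divisor different from $B$ --- so the two rays could perfectly well share the same numerical and geometric profile; your assertion that the second contraction ``would have to contract $B$ as well'' does not follow from anything. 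The ``cleaner alternative'' implicitly treats the swap as the exchange of the basis vectors $\widetilde{L}$ and $B/2$, but $B$ is not nef, so $B/2$ does not span the second extremal ray of $\Nef(\zht)$; that ray is an unknown class, and the asserted ``short computation'' showing the swap is impossible is never actually performed. (Also, $\sigma^{\ast}$ preserves $\Eff(\zht)$, so it certainly cannot be shown absurd by ``sending the effective class $B$ to a non-effective one.'')

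The paper rules out the swap by exploiting the sign of $K_{\zht}$, and this is precisely where the hypothesis that $K_Z$ is nef does its real work. If $\sigma^{\ast}$ swaps the rays, it is a non-trivial involution of the rank-two lattice $\Pic(\zht)$, so its fixed subspace in $\Pic(\zht)_{\QQ}$ is the line spanned by $\widetilde{L} + \sigma^{\ast}\widetilde{L}$, which is ample because it lies in the interior of $\Nef(\zht)$. Since $K_{\zht}$ is fixed and non-zero (this is where $n \geq 3$ enters), it spans that same line, so $K_{\zht}$ or $-K_{\zht}$ must be ample. Now $K_{\zht}$ meets a line in a fiber of the Hilbert--Chow exceptional divisor in $-(n-2) < 0$, so it is not ample; and if $-K_{\zht}$ were ample, intersecting with a curve $\widetilde{C} = \{(z,y) : y \in C\}$ disjoint from $B$ would give $-K_Z \cdot C > 0$, hence $-K_Z$ ample, contradicting the nefness of $K_Z$. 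Some argument of this kind is unavoidable: for $n = 2$ and $K_Z = 0$ (a quartic K3 surface of Picard number one) the Beauville involution really does act non-trivially on the Picard group of the Hilbert square, so the swap cannot be excluded by formal cone considerations alone.
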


\begin{proof} Let $f : \zht \to \zht$ be an automorphism. Suppose that $f^{\ast}$ acts non-trivially on $\Pic(\zht)$. Then $f^{\ast}$ must exchange the two extremal rays of $\Nef(\zht)$. Note also that
\[ \Pic(\zht) = \ZZ\cdot\widetilde{L} \oplus \ZZ\cdot \frac{D}{2}, \]
where $D$ is the exceptional divisor of the Hilbert--Chow morphism $\zht \to \zst$. Since $\widetilde{L}$ is nef, primitive and extremal, $f^{\ast}(\widetilde{L})$ is the primitive generator of the other extremal ray of $\Nef(\zht)$. It follows that $f^{\ast}(f^{\ast}(\widetilde{L})) = \widetilde{L}$, and $f^{\ast}\circ f^{\ast} = \mathrm{id}_{\Pic(\zht)}$. Moreover, $\widetilde{L} + f^{\ast}(\widetilde{L})$ is ample as it is lying in the interior of the nef cone.

We have $f^{\ast}(K_{\zht}) = K_{\zht}$, and
\[ K_{\zht} = \widetilde{K_Z} + \frac{n - 2}{2}D \]
is non-zero (because $n \geq 3$). Since $f^{\ast}$ acts non-trivially, the $f$-fixed part of $\Pic(\zht)_{\QQ}$ is spanned by $K_{\zht}$. As $\widetilde{L} + f^{\ast}(\widetilde{L})$ is fixed by $f^{\ast}$ as well, we have
\[ \widetilde{L} + f^{\ast}(\widetilde{L}) = \alpha K_{\zht} \]
for some non-zero $\alpha \in \QQ$. So either $K_{\zht}$ or $-K_{\zht}$ is ample.

One can find that $K_{\zht}$ is never ample, since any curve in a fiber of the Hilbert--Chow morphism intersects $K_{\zht}$ negatively. Hence $-K_{\zht}$ must be ample. For each curve $C \subset Z$ and a fixed point $z \in Z \setminus C$, there is a curve $\widetilde{C}$ on $\zht$ of the form $\{(z, y) \in \zht \,|\, y \in C\}$. Since $-K_{\zht}\cdot \widetilde{C} > 0$, we have $-K_Z\cdot C > 0$. This implies that $-K_Z$ is ample, which  is a contradiction.
\end{proof}

In what follows, we handle the case where $X \subset \PP^N$ is a smooth Fano hypersurface, that is, $\deg X \leq N$. Let us seek for the second ray of $\partial\Nef(\xht)$ and the associated contraction, which we call the \textit{second contraction} of $\xht$ for convenience. The remaining question is to construct and analyze the second contraction.

\begin{example}[{\cite[Theorem 4]{BOR20}}]\label{exps} Let us consider the case $X = \PP^N$. Then the Picard number of $(\PP^N)^{[2]}$ is two, and the two contractions are exactly the Hilbert--Chow morphism
\[(\PP^N)^{[2]} \to (\PP^N)^{(2)} \]
and the morphism
\[ (\PP^N)^{[2]} \to \Gr(2, N+1) \]
which maps a length-two subscheme of $\PP^N$ to the line passing through it.

Since $\dim (\PP^N)^{[2]} = 2N$ while $\dim \Gr(2, N+1) = 2(N - 1)$, the second contraction of $(\PP^N)^{[2]}$ is of fiber type. This implies that each automorphism $\sigma \in \Aut((\PP^N)^{[2]})$ cannot exchange two rays of the boundary of $\Nef((\PP^N)^{[2]})$ and hence must preserve them up to scaling. Since $\sigma$ also preserves the divisibility, we can see that $\sigma$ acts on $\Nef((\PP^N)^{[2]})$ identically. Therefore, by Proposition \ref{propred}, every automorphism of $(\PP^N)^{[2]}$ is natural.
\end{example}

In what follows, we assume that the smooth hypersurface $X \subset \PP^N$ is of degree $\geq 2$ (and of dimension $\geq 3$). Similar to Example \ref{exps}, we have the following morphism
\[ \psi: \xht \to \Gr(2, N+1) \]
that sends $z \in \xht$ to the line $\ell_z$ generated by $z$ in $\PP^N$. Clearly, we have the following fact.

\begin{lemma} The morphism $\psi$ is finite if and only if $X$ contains no lines. In other words, $\psi$ contracts curves if and only if $X$ contains a line.
\end{lemma}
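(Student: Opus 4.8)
The plan is to prove the two implications separately, using the identification $(\PP^1)^{[2]}\cong\PP^2$ for one direction and the fact that a proper quasi-finite morphism is finite for the other. Suppose first that $X$ contains a line $\ell$. The closed immersion $\ell\hookrightarrow X$ induces a closed immersion $\ell^{[2]}\hookrightarrow\xht$ of Hilbert squares, and since $\ell\cong\PP^1$ we have $\ell^{[2]}\cong(\PP^1)^{[2]}\cong\PP^2$, so $\xht$ contains a copy of $\PP^2$. Every length-two subscheme $z$ of $\PP^N$ has linear span a line (the span is the intersection of the hyperplanes containing $z$, and it has dimension exactly one because $z$ has length two, using that $\OO(1)$ is very ample), and if $z$ is contained in $\ell$ then this span is $\ell$ itself. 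Hence $\psi$ sends the whole of $\ell^{[2]}$ to the single point $[\ell]\in\Gr(2,N+1)$; in particular $\psi$ contracts every curve in this $\PP^2$, and so $\psi$ is not finite.

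Conversely, suppose $X$ contains no line; I would show that every fibre of $\psi$ is finite. Fix a line $\ell\subseteq\PP^N$ and a point $z\in\psi^{-1}([\ell])$. Then $z$ is contained in its span $\ell$, and since also $z\subseteq X$ we get $z\subseteq\ell\cap X$. Because $X$ contains no line, $\ell\not\subseteq X$, so $\ell\cap X$ is a proper closed subscheme of $\ell\cong\PP^1$, hence a zero-dimensional scheme, namely an effective divisor $W=\sum_i m_i p_i$ of degree $d=\deg X$ on $\PP^1$. Via $(\PP^1)^{[2]}\cong\PP^2$, the length-two subschemes contained in $W$ form the finite subscheme $W^{[2]}\subseteq\PP^2$ (concretely, the divisors $p_i+p_j$ with $i\neq j$, or $2p_i$ with $m_i\geq 2$), so there are only finitely many possibilities for $z$. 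Thus $\psi^{-1}([\ell])$ is finite. Since $\xht$ and $\Gr(2,N+1)$ are projective, $\psi$ is proper; being proper with finite fibres, it is quasi-finite and proper, hence finite.

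Combining the two parts, and using that a proper morphism is finite precisely when it contracts no curve (a positive-dimensional fibre always contains a curve), we obtain both equivalences in the statement. I do not expect a genuine obstacle here: the only point requiring a little care is the description of the length-two subschemes contained in a fixed divisor on $\PP^1$, which is cleanest when phrased through the isomorphism $(\PP^1)^{[2]}\cong\PP^2$ and the observation that $(\ell\cap X)^{[2]}$ is then a finite subscheme of it.
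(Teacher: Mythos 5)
Your proof is correct, and it simply fills in the routine verification that the paper omits entirely (the lemma is introduced with ``Clearly, we have the following fact'' and given no proof). Both directions are the intended ones: a line $\ell\subset X$ gives a contracted $\ell^{[2]}\cong\PP^2$, and in the absence of lines the fibres of $\psi$ sit inside the finite scheme of length-two subschemes of $\ell\cap X$, so properness plus quasi-finiteness gives finiteness.
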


Assume that $X$ contains a line. We take the Stein factorization of $\psi$:
\[ \xht \xrightarrow{\phi} W \rightarrow \Gr(2, N+1), \]
where $W \to \Gr(2, N+1)$ is a finite morphism and
\[ \phi: \xht \to W \]
admits connected fibers. Note that $\dim \xht = 2N - 2 = \dim \Gr(2, N+1)$. We will see that the birational morphism $\phi$ is precisely the second contraction of $\xht$. 

\begin{lemma}\label{type} Let $N\geq 4$ be a positive integer. Let $X\subset \PP^N$ be a smooth Fano hypersurface of degree $d \geq 2$.

\medskip $(1)$ If $d = 2$, then the contraction $\phi$ is divisorial.

\medskip $(2)$ If $3 \leq d \leq 8$, then the contraction $\phi$ is small.
\end{lemma}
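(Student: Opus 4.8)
The plan is to analyze the fibers of the second contraction $\phi$ over points of $W$, equivalently the fibers of $\psi$ over lines $\ell \in \Gr(2,N+1)$. For a line $\ell \subset \PP^N$, the fiber $\psi^{-1}(\ell)$ consists of all length-two subschemes $z$ of $X$ with $\ell_z = \ell$; such a $z$ is supported on $X \cap \ell$ and forces $\ell \subset X$ or $\ell \cap X$ to be a finite scheme of length $\geq 2$. I would stratify $\Gr(2,N+1)$ according to the intersection behaviour of $\ell$ with $X$: (i) lines $\ell \not\subset X$, where $\ell \cap X$ is a length-$d$ subscheme of $\ell \cong \PP^1$, so $\psi^{-1}(\ell)$ is identified with the Hilbert scheme of length-two subschemes of the length-$d$ divisor $\ell \cap X$, a finite set (of $\binom{d}{2}$ points generically, and $0$-dimensional always); and (ii) lines $\ell \subset X$, where in addition to finite subschemes of $X \cap \ell' $ for nearby $\ell'$ one now has the whole symmetric square type locus, i.e. $\psi^{-1}(\ell) \supseteq \ell^{[2]} \cong \PP^2$. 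The key point is that $\phi$ contracts exactly the locus swept out by these $\ell^{[2]}$'s as $\ell$ ranges over $F(X)$, the Fano scheme of lines; call this locus $E \subset \xht$.

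Next I would compute $\dim E$ and $\dim \phi(E)$ using Theorem \ref{basic} and Theorem \ref{bemain}. For general $X$ (and unconditionally for $d \leq 8$ by Theorem \ref{bemain}), $F(X)$ has the expected dimension $2N-3-d$; but since $X$ is smooth Fano I should argue that $\dim F(X) = 2N-3-d$ holds for every such $X$ in the stated range — for $d \leq 8$ this is exactly Theorem \ref{bemain}, and for $d=2$ the Fano scheme of lines on a smooth quadric is well understood and has dimension $2N-5$. The contracted locus $E$ is fibered over $F(X)$ with two-dimensional fibers $\ell^{[2]} \cong \PP^2$, so $\dim E = 2N-3-d + 2 = 2N-1-d$. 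Its image $\phi(E) \subset W$ maps finitely to $F(X) \subset \Gr(2,N+1)$ (a point of $W$ over $\ell \subset X$ records $\ell$ together with at most finitely many extra data), so $\dim \phi(E) = \dim F(X) = 2N-3-d$. Hence the fiber dimension of $\phi$ over $\phi(E)$ is generically $(2N-1-d) - (2N-3-d) = 2$, consistent with $\PP^2$-fibers. Now compare $\dim E$ with $\dim \xht - 1 = 2N-3$: one has $\dim E = 2N-1-d = 2N-3$ iff $d = 2$, and $\dim E = 2N-1-d < 2N-3$ iff $d \geq 3$ (while $\dim E \geq \dim \phi(E) + 1$ always holds since $\phi$ is not finite). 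This gives precisely the dichotomy: for $d=2$ the exceptional locus $E$ is a divisor, so $\phi$ is divisorial; for $3 \leq d \leq 8$ the exceptional locus has codimension $\geq 2$, so $\phi$ is small. I would also note that $E$ is irreducible (as $F(X)$ is irreducible for smooth Fano hypersurfaces of degree $d \leq 8$, or at least equidimensional, and the $\PP^2$-bundle-like total space over it is equidimensional), so "the exceptional locus" is unambiguous.

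The main obstacle I expect is controlling $\dim F(X)$ and the geometry of $E$ for \emph{every} smooth Fano hypersurface in the given degree range, not just a general one — Theorem \ref{basic}(3) only gives smoothness and expected dimension for general $X$, and it is precisely to bypass this that Theorem \ref{bemain} is invoked for $d \leq 8$. A secondary technical point is verifying that over a general line $\ell \subset X$ the fiber $\psi^{-1}(\ell)$ is exactly $\ell^{[2]} \cong \PP^2$ with no lower-dimensional components merging in, and that the scheme structure of the fiber is reduced enough that the contracted locus $E$ really has the dimension computed above; this amounts to a local analysis of $\psi$ near such $\ell$, or alternatively a dimension count showing the union $\bigcup_{\ell \in F(X)} \ell^{[2]}$ is the whole non-finite locus of $\psi$. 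Once $\dim E$ is pinned down, the divisorial-versus-small dichotomy is immediate from the definitions.
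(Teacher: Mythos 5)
Your proposal is correct and follows essentially the same route as the paper: identify the contracted locus $E$ as the union of the planes $\ell^{[2]}\cong\PP^2$ over $\ell\in F(X)$, use Theorem \ref{bemain} to get $\dim F(X)=2N-3-d$ unconditionally for $d\leq 8$, conclude $\dim E=\dim F(X)+2$, and compare with $\dim\xht-1$ to get the divisorial/small dichotomy. The extra checks you flag (fiber structure over a line, irreducibility of $E$) are reasonable diligence but not needed beyond the dimension count the paper records.
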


\begin{proof} Notice that $X$ contains a line by Theorem \ref{basic} (1). The map $\phi$ contracts the subset
\[ T:= \{ z \in \xht \,:\, \mathrm{the \ line} \ \ell_z \text{ is contained in } X \} \]
of $\xht$. One can see that each fiber of $T \to F(X)$ is $(\PP^1)^{[2]} \cong \PP^2$, and hence $\dim T = \dim F(X) + 2$.

If $d = 2$, then $\dim F(X) = 2N - 5$ by Theorem \ref{bemain}, so
\[ \dim T = 2N - 3 = \dim \xht - 1. \]
Therefore, $\phi$ is divisorial.

If $3 \leq d \leq 8$, then by Theorem \ref{bemain},
\[ \dim T = 2N - d - 1 \leq 2N - 4 = \dim \xht - 2. \]
Hence we can conclude that $\phi$ is small.
\end{proof}

\begin{remark} $(1)$ In the proof of Theorem \ref{main}, we will only use Lemma \ref{type} for the case $N = 4$ and $2 \leq d \leq 4$. In Section \ref{exam}, we will discuss the Hilbert square $\xht$ with $X \subset \PP^N$ of degree $d \leq 4$ in more detail, where $N \geq 4$ is arbitrary.

\medskip $(2)$ As we mentioned in Section \ref{prel}, the conjecture of Debarre and de Jong says that $F(X)$ always has the expected dimension $2N - d - 3$ for an arbitrary smooth Fano hypersurface $X \subset \PP^N$. This would imply that the contraction $\phi: \xht \to W$ is small for all $3 \leq d \leq N$, which is true for general $X$ by Theorem \ref{basic} (3).
\end{remark}

\begin{lemma}\label{fano} Let $X \subset \PP^N$ be a smooth Fano hypersurface with $N \geq 4$, then all automorphisms of $\xht$ act as the identity on $\Pic(\xht)$.
\end{lemma}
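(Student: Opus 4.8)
\medskip\noindent\textit{Proof strategy.} The goal is to show that every $\sigma\in\Aut(\xht)$ acts trivially on $\Pic(\xht)$. Since $\rho(\xht)=2$, the cone $\Nef(\xht)$ is two-dimensional and strictly convex, and---because $X$ contains a line, by Theorem~\ref{basic}(1) and $d\le N\le 2N-3$---its two extremal rays are $\RR_{\ge 0}\widetilde H$, contracted by the Hilbert--Chow morphism $\mathrm{HC}\colon\xht\to\xst$, and $\RR_{\ge 0}R$, where $R$ is the primitive generator of the ray contracted by the second contraction $\phi\colon\xht\to W$. Each $\sigma^{\ast}$ is an automorphism of the lattice $\Pic(\xht)=\ZZ\widetilde H\oplus\ZZ\frac{B}{2}$ preserving $\Nef(\xht)$, so it permutes the two extremal rays. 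If it fixes both, then $\sigma^{\ast}\widetilde H=\widetilde H$ and $\sigma^{\ast}R=R$ by primitivity, and since $\widetilde H,R$ span $\Pic(\xht)_{\QQ}$ we get $\sigma^{\ast}=\id$. So the task reduces to excluding a swap. If $\sigma^{\ast}$ swaps the rays then $\sigma^{\ast}\widetilde H=R$, $\sigma^{\ast}R=\widetilde H$ and $(\sigma^{\ast})^{2}=\id$; then $\widetilde H+R$ is fixed by the non-trivial involution $\sigma^{\ast}$ of $\Pic(\xht)_{\RR}\cong\RR^{2}$, whose fixed line is $\RR K_{\xht}$ (note $K_{\xht}\ne 0$ as $N\ge 4$), so $\widetilde H+R\in\RR K_{\xht}$; since $\widetilde H+R$ is ample, one of $\pm K_{\xht}$ is ample.

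To derive the contradiction I would test against two curves: a line $C_{1}\cong\PP^{1}$ in a fiber $\cong\PP^{N-2}$ of $\mathrm{HC}$ over a point of the diagonal, and a line $C_{2}\cong\PP^{1}$ in a fiber $\ell^{[2]}\cong\PP^{2}$ of $\phi$ over a point $[\ell]\in F(X)$. A direct computation---via the blow-up/quotient presentation of $\xht$ for $C_{1}$, and by restricting to $\ell^{[2]}\cong\PP^{2}$ for $C_{2}$---gives $\widetilde H\cdot C_{1}=0$, $\frac{B}{2}\cdot C_{1}=-1$, $\widetilde H\cdot C_{2}=1$, $\frac{B}{2}\cdot C_{2}=1$, hence, using $K_{\xht}=-(N+1-d)\widetilde H+(N-3)\frac{B}{2}$, one finds $K_{\xht}\cdot C_{1}=-(N-3)<0$ and $-K_{\xht}\cdot C_{2}=4-d$. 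When $d\ge 4$ this already finishes the proof: $K_{\xht}$ is not ample by the first inequality, and $-K_{\xht}$ is not ample since $4-d\le 0$, contradicting the conclusion of the previous paragraph.

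It remains to handle $d=2$ and $d=3$, where $\xht$ is itself Fano and the argument above collapses; here I would instead exploit the second contraction directly. A swap makes $\sigma_{\ast}$ interchange the two extremal rays of $\overline{NE}(\xht)$, so $\sigma$ sends curves contracted by $\mathrm{HC}$ to curves contracted by $\phi$; as $B$ is covered by curves of the former type, $\sigma(B)\subseteq\mathrm{Exc}(\phi)$. If $d=3$, then $\phi$ is small by Lemma~\ref{type}(2), so $\mathrm{Exc}(\phi)$ has codimension at least $2$ while $B$ has codimension $1$---impossible, $\sigma$ being an isomorphism. If $d=2$, then a general line meets the quadric $X$ in exactly two points, so $\psi\colon\xht\to\Gr(2,N+1)$ is birational; by Zariski's main theorem $\phi=\psi$ and $W\cong\Gr(2,N+1)$, which is smooth, whereas a swap would conjugate $\mathrm{HC}$ to $\phi$ and therefore force $\xst\cong W\cong\Gr(2,N+1)$, contradicting that $\xst$ is singular along the diagonal. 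In every case $\sigma^{\ast}$ cannot swap the two rays, so $\sigma^{\ast}=\id$.

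The step I expect to be the main obstacle is the small-degree case $d\in\{2,3\}$: there $\xht$ is Fano, so $K_{\xht}$ offers no handle on a potential swap, and the contradiction has to be squeezed out of the fine structure of the second contraction $\phi$---its exceptional locus when $d=3$ (through the dimension of $F(X)$, i.e.\ Lemma~\ref{type} and the input of Section~\ref{prel}) and its target when $d=2$. The computation of the intersection numbers $\widetilde H\cdot C_{i}$ and $\frac{B}{2}\cdot C_{i}$ is the other place where care is needed.
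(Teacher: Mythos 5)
Your proposal is correct and proves the lemma; it shares the paper's skeleton (reduce to excluding a swap of the two boundary rays of $\Nef(\xht)$, use the second contraction $\phi$ coming from lines on $X$), but the case division and the decisive mechanism differ in one real way. The paper splits on $N$: for $N\ge 5$ it observes that the nontrivial fibers of the Hilbert--Chow morphism are $\PP^{N-2}$ while those of $\psi$ are $\PP^{2}$, so the two contractions cannot be exchanged; only for $N=4$ does it fall back on the $d=2$ smooth-versus-singular-target argument and, for $d\in\{3,4\}$, on the position of the two rays relative to $\Mov(\xht)$ (divisorial ray on the boundary, small ray in the interior). You instead split on $d$: your $d=3$ argument ($\sigma(B)\subseteq\mathrm{Exc}(\phi)$ forces a divisor into codimension $\ge 2$) is the movable-cone argument in concrete form, your $d=2$ argument is the paper's, but for $d\ge 4$ you import the mechanism of Lemma \ref{nef} into the Fano range: a swap forces $\widetilde H+R$ onto the fixed line $\RR K_{\xht}$, hence $\pm K_{\xht}$ ample, which you kill with the intersection numbers $K_{\xht}\cdot C_1=-(N-3)<0$ and $K_{\xht}\cdot C_2=d-4\ge 0$ (both computations are correct, using $B/2\cdot C_1=-1$ on a $\PP^{N-2}$-fiber and $B\cap\ell^{[2]}$ a conic in $\ell^{[2]}\cong\PP^2$). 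What each buys: your route is uniform in $N$ and shows explicitly that the canonical class obstructs a swap as soon as $d\ge 4$, i.e.\ as soon as $\xht$ stops being Fano in the fiber direction of $\phi$; the paper's $N\ge 5$ argument is softer, needing no intersection numbers and no input from Beheshti's theorem for large $N$, whereas you invoke Lemma \ref{type}(2) (hence \cite{Be14}) for $d=3$ in all dimensions. Both uses are legitimate since $d=3\le 8$ there.
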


\begin{proof} By Theorem \ref{basic} (1), $X$ contains a line. Thus,
\[ \phi: \xht \to W \]
is a contraction of $\xht$. The fibers of
\[ \psi: \xht \to \Gr(2, N + 1) \]
are either points or isomorphic to $\PP^2$; while the fibers of the Hilbert--Chow morphism $\xht \to \xst$ are either points or isomorphic to $\PP^{N-2}$.

If $N \geq 5$, then these two contractions are non-isomorphic. So we know that every automorphism of $\xht$ cannot interchange two rays of the boundary of $\Nef(\xht)$ and must preserve them up to scaling. Since every automorphism also preserves the divisibility, one can conclude that it acts on $\Nef(\xht)$ identically.

Let us consider the remaining cases, namely, $N = 4$ and $2 \leq d \leq 4$. The argument is indeed available for arbitrary $N \geq 4$. Assume that $d = 2$. Consider the morphism
\[ \psi: \xht \to \Gr(2, N+1). \]
Each generic line in $\PP^N$ intersects the hyperquadric $X$ in $2$ points (with multiplicity), so the map $\psi$ is generically one-to-one. This implies that $\psi$ itself is a birational contraction. By Lemma \ref{type} (1), we know that $\psi$ is divisorial. The other contraction of $\xht$ is the Hilbert--Chow morphism $\xht \to \xst$. These two contractions are non-isomorphic, since they have non-isomorphic images. Thus, every automorphism of $\xht$ fixes two rays of the boundary of $\Nef(\xht)$, which implies that it acts on $\Nef(\xht)$ identically.

Now assume that $3 \leq d \leq 4$. Notice that every automorphism of $\xht$ preserves the movable cone $\Mov(\xht)$ of $\xht$ as well. Since the first ray of $\Nef(\xht)$ induces the Hilbert--Chow morphism which is divisorial, one can see that it lies on the boundary of $\Mov(\xht)$. By Lemma \ref{type} (2), the second ray induces a small contraction of $\xht$, so it lies in the interior of $\Mov(\xht)$. This implies that every automorphism of $\xht$ preserves two rays of the boundary of $\Nef(\xht)$ and hence it acts on $\Nef(\xht)$ identically.
\end{proof}

In summary, we have the following proposition (by Lemmas \ref{nef} and \ref{fano}).

\begin{proposition}\label{mainprop} Let $X \subset \PP^N$ be a smooth projective hypersurface with $N \geq 4$, then all automorphisms of $\xht$ act as the identity on $\Pic(\xht)$. \qed
\end{proposition}

\begin{proof}[Proof of Theorem \ref{main}] This follows immediately from Proposition \ref{propred} and Proposition \ref{mainprop}.
\end{proof}

\begin{proof}[Proof of Corollary \ref{cor}] By \cite[Page 347]{MM64}, the automorphism group of a smooth projective hypersurface $X$ of dimension $\geq 3$ and degree $d \geq 3$ is finite, and is trivial provided additionally $X$ general. The assertion then follows from Theorem \ref{main}.
\end{proof}

\begin{remark} One can also see that $\Aut((\PP^N)^{[2]}) \cong \Aut(\PP^N)$ is the projective linear group $\PGL(N+1, \CC)$ when $N \geq 2$. For a smooth hyperquadric $Q \subset \PP^N$ of dimension $\geq 3$, $\Aut(Q^{[2]}) \cong \Aut(Q)$ is the projective orthogonal group $\PO(N+1, Q)$, where $Q$ is regarded as a quadratic form in $N+1$ (complex) variables.
\end{remark}

\begin{remark} Consider a simply connected smooth projective variety $Z$ of Picard number $1$ and of dimension $\geq 3$ whose canonical divisor $K_Z$ is nef. Then by Lemma \ref{nef} and Proposition \ref{propred}, every automorphism of $\zht$ is natural. A typical example of such $Z$ is a general complete intersection of dimension $\geq 3$ in $\PP^N$ which is either Calabi--Yau or of general type. It would be interesting to investigate the case of a Fano complete intersection.
\end{remark}


\section{Examples}\label{exam}

In this section, we discuss the case where the underlying hypersurface is of degree $\leq 4$ in more detail. Let $N \geq 4$ be a positive integer. Let $X \subset \PP^N$ be a smooth projective hypersurface of degree $d\leq 4$.

Let $Z$ be a normal $\QQ$-factorial projective variety. Recall that a \textit{small $\QQ$-factorial modification} of $Z$ is a birational map $Z \dashrightarrow W$ to another normal $\QQ$-factorial projective variety $W$ which is isomorphic in codimension $1$. The basic example of a small $\QQ$-factorial modification is a flip.

\begin{example} When $d = 1$, $X$ is isomorphic to $\PP^{N-1}$. By Example \ref{exps}, we know that the second contraction of $\xht$ is of fiber type. Thus, the only small $\QQ$-factorial modification of $\xht$ is the identity map.
\end{example}

\begin{example} When $d = 2$, $X$ is a smooth hyperquadric. By Lemma \ref{type} (1), we know that the second contraction of $\xht$ is divisorial. Again the only small $\QQ$-factorial modification of $\xht$ is the identity map.
\end{example}

\begin{example} When $d = 3$, $X$ is a smooth cubic hypersurface. By Lemma \ref{type} (2), we know that the second contraction of $\xht$ is small. Thus, there exists a non-trivial small $\QQ$-factorial modification of $\xht$. One can describe it explicitly as in \cite[Section 4]{BFR20}.

Let $T_{\PP^N}|_X$ be the restriction of the tangent bundle of $\PP^N$ to $X$, and let $P_X := \PP(T_{\PP^N}|_X) \to X$ be its projectivization. Then $P_X$ is a $\PP^{N-1}$-bundle over $X$, and parametrises a point on $X$ together with a line in $\PP^N$ passing through it. There is a map $\zeta: \xht \dashrightarrow P_X$ sending $z \in \xht$ to $(\ell_z, x) \in P_X$, where $\ell_z$ is the line determined by $z$, and $x$ is the residual intersection point of $\ell_z$ with $X$ (\cite[(5.3)]{GS14}). Based on \cite{GS14, Vo17}, in \cite[Corollary 15]{BFR20}, it is shown that $\zeta$ is a standard flip. Since the natural morphism $P_X := \PP(T_{\PP^N}|_X) \to X$ is of fiber type, one can see that $\zeta: \xht \dashrightarrow P_X$ is the unique non-trivial small $\QQ$-factorial modification of $\xht$.
\end{example}

As it is shown, when $d = 1, 2$, or $3$, the Hilbert square $\xht$ is a smooth Fano variety (\cite[Lemma 2.9]{Sa20}, \cite[Proposition 20]{BFR20}). In particular, $\xht$ is a Mori dream space (\cite[Corollary 1.3.2]{BCHM10}) in the following sense of Hu--Keel \cite[Definition 1.10]{HK00}.

\begin{definition} Let $X$ be a normal $\QQ$-factorial projective variety. We call $X$ a \textit{Mori dream space}, if the following three conditions are satisfied:

\medskip $(1)$ $\Pic(X)_{\QQ} = N^1(X)_{\QQ}$, or equivalently, $h^1(X, \OO_X) = 0$.

\medskip $(2)$ $\Nef(X)$ is generated by finitely many semi-ample divisors as a convex cone.

\medskip $(3)$ There are finitely many small $\QQ$-factorial modifications $f_i: X \dashrightarrow X_i$, $1 \leq i \leq n$, such that each $X_i$ satisfies $(1)$ and $(2)$ and
\[ \Mov(X) = \bigcup^n_{i = 1}f^{\ast}_i(\Nef(X_i)). \]
\end{definition}

For more details about Mori dream spaces, we refer to \cite{HK00, McK10, Ca18}. In what follows, we consider the Hilbert square of a smooth quartic hypersurface (of dimension $\geq 3$), on which we will construct a birational involution. This is analogous to the classical Beauville involution \cite[Section 6]{Be83} (see also \cite[6.1]{BC16}).

\begin{example}[The Beauville involution] Let $S \subset \PP^3$ be a smooth quartic surface with Picard number $1$. Then $S$ is a K3 surface. Consider the morphism
\[ \psi: S^{[2]} \to \Gr(2, 4), \]
that maps $z \in S^{[2]}$ to the line $\ell_z$ in $\PP^3$ spanned by $z$. Since $S$ has Picard number $1$, it contains no lines. In particular, the morphism $\psi$ is finite. Note that each line in $\PP^3$ intersects $S$ in $4$ points (with multiplicity). One can define a rational map
\[ \iota: S^{[2]} \dashrightarrow S^{[2]} \]
that sends $z$ to the length-two subscheme $z^{\prime}$ defined by $\ell_z \cap S = z \amalg z^{\prime}$.

Note that $\iota$ is defined at a point $z \in S^{[2]}$ if and only if the line $\ell_z$ is not contained in $S$. Thus, the map $\iota$ is indeed everywhere defined again due to the fact that $S$ contains no lines. Therefore, we obtain an involution $\iota \in \Aut(S^{[2]})$ which is not natural and fits the following commutative diagram
\[ \xymatrix{
  S^{[2]} \ar[rr]^{\iota} \ar[dr]_{\psi}
                &  &    S^{[2]} \ar[dl]^{\psi}    \\
                & \Gr(2, 4)                 }.
\] 
\end{example}

\begin{example} Let $X \subset \PP^N$ be a smooth quartic hypersurface of dimension $\geq 3$. Similar to the Beauville involution, we can define the following involution
\[ \iota: \xht \dashrightarrow \xht \]
by sending $z \in \xht$ to the length-two subscheme $z^{\prime}$ defined by $\ell_z \cap X = z\amalg z^{\prime}$, where $\ell_z$ denotes the line associated with $z$. Since $X$ must contain lines in this case by Theorem \ref{basic} (1), we know that $\iota$ can never be an automorphism of $\xht$.

Since $\dim F(X) = 2N - 7 \geq 1$ while $\dim \xht = 2N - 2$, by the proof of Lemma \ref{type} (2), we know that $\iota$ is defined outside a subvariety of $\xht$ of codimension $3$. Hence $\iota$ is a small birational automorphism of $\xht$ which fits the following commutative diagram
\[ \xymatrix{
  \xht \ar@{-->}[rr]^{\iota} \ar[dr]_{\psi}
                &  &    \xht \ar[dl]^{\psi}    \\
                & \Gr(2, N+1)                 }.
\]
We can also conclude that $\iota: \xht \dashrightarrow \xht$ is the only non-trivial small $\QQ$-factorial modification of $\xht$ and that $\xht$ is a Mori dream space.
\end{example}

\medskip

\end{document}